\title{A general formula for Hecke-type false theta functions}
\author{Eric T. Mortenson}
\address{Department of Mathematics and Computer Science, Saint Petersburg State University, Saint Petersburg, 199034, Russia}
\email{etmortenson@gmail.com}
\renewcommand\theta{\vartheta}
\newcommand\sg{\operatorname{sg}}
\newtheorem{theorem}{Theorem}
\theoremstyle{definition}
\newtheorem{definition}[theorem]{Definition}
\numberwithin{theorem}{section} 
\numberwithin{equation}{section}
\begin{document}

\date{27 December 2022}

\subjclass[2020]{11B65, 11F27}

\keywords{mock theta functions, false theta functions, theta functions}

\begin{abstract}
In recent work where Matsusaka generalizes the relationship between Habiro-type series and false theta functions after Hikami,  five families of Hecke-type double-sums of the form
\begin{equation*}
\left( \sum_{r,s\ge 0 }-\sum_{r,s<0}\right)(-1)^{r+s}x^ry^sq^{a\binom{r}{2}+brs+c\binom{s}{2}},
\end{equation*}
where $b^2-ac<0$, are decomposed into sums of products of theta functions and false theta functions.   Here we obtain a general formula for such double-sums in terms of theta functions and false theta functions, which subsumes the decompositions of Matsusaka.  Our general formula is similar in structure to the case $b^2-ac>0$, where Mortenson and Zwegers obtain a decomposition in terms of Appell functions and theta functions.
\end{abstract}

\maketitle

\section{Introduction}
Let $q$ be a nonzero complex number with $|q|<1$.   We recall the $q$-Pochhammer notation:
\begin{equation*}
(x)_n=(x;q)_n:=\prod_{i=0}^{n-1}(1-q^ix), \ \ (x)_{\infty}=(x;q)_{\infty}:=\prod_{i\ge 0}(1-q^ix).
\end{equation*}
We also recall the basic definition of a theta function:
\begin{equation*}
\Theta(x;q):=(x)_{\infty}(q/x)_{\infty}(q)_{\infty}=\sum_{n=-\infty}^{\infty}(-1)^nq^{\binom{n}{2}}x^n,
\end{equation*}
where the last equality is the Jacobi triple product identity.    False theta functions are theta functions but with the wrong signs \cite{AW}.  Partial theta functions can be thought of as half of a theta function, in that we only sum over non-negative $n$.

Matsusaka \cite{Ma22} recently expressed false theta functions in terms of Hecke-type double-sums:
\begin{definition} \label{definition:fabc-def}  Let $x,y\in\mathbb{C} \backslash \{0\}$ and define $\sg (r):=1$ for $r\ge 0$ and $\sg(r):=-1$ for $r<0$. Then
\begin{equation}
f_{a,b,c}(x,y;q):=\sum_{r,s\in \mathbb{Z}}\sg(r,s)(-1)^{r+s}x^ry^sq^{a\binom{r}{2}+brs+c\binom{s}{2}}, \ \sg(r,s):=\left(\frac{\sg(r)+\sg(s)}{2}\right).\label{equation:fabc-def}\\
\end{equation}
\end{definition}
\noindent We note that we can also write
\begin{equation}
f_{a,b,c}(x,y;q)=\left( \sum_{r,s\ge 0 }-\sum_{r,s<0}\right)(-1)^{r+s}x^ry^sq^{a\binom{r}{2}+brs+c\binom{s}{2}}.
\label{equation:fabc-def2}
\end{equation}
The motivation for Matsusaka's work is a family of $q$-series originating from Habiro's work on Witten--Reshetikhin--Turaev invariants.  Matsusaka \cite{Ma22} generalizes the relation between five Habiro-type series \cite{Ha08} and false theta functions after Hikami \cite{Hi07}.  In \cite{Ma22}, five families of Habiro-type series are expressed in terms of sums of the form (\ref{equation:fabc-def2}) where $b^2-ac<0$.   As an example, one family reads \cite[Theorem 2.15]{Ma22}:
\begin{equation*}
H_{p}^{(2)}(q)=\frac{1}{(q)_{\infty}}\Big ( \sum_{a,b\ge 0}-\sum_{a,b<0}\Big ) 
(-1)^{a+b}q^{(p+1)a}q^{2b}q^{(2p+1)\binom{a}{2}+2ab+3\binom{b}{2}},
\end{equation*}
where $H_{p}^{(2)}(q)$ is a Habiro-type series, see Section \ref{section:examples}.  Matsusaka then decomposes the five families into sums of products of theta functions and false theta functions \cite[Theorems 3.12, 3.13, 3.15]{Ma22}.   The decompositions are then used to compute radial limits \cite{LZ}, \cite[Theorem 3.21]{Ma22}.

This setting contrasts with \cite{CK, Mo21B}, where false theta functions are expressed in terms of double-sums where instead there is a plus sign between the summation symbols in (\ref{equation:fabc-def2}) and we do not have the restriction $b^2-ac<0$.  It also contrasts with the setting where $b^2-ac>0$.

In \cite{HM, MZ}, double-sums of the form (\ref{equation:fabc-def2}) where $b^2-ac>0$ are extensively studied.  Expansions are obtained that express the double-sums in terms of Appell functions, i.e. the building blocks of Ramanujan's mock theta functions, and theta functions.   Here, Appell functions are defined
\begin{equation*}
m(x,z;q):=\frac{1}{\Theta(z;q)}\sum_{r=-\infty}^{\infty}\frac{(-1)^rq^{\binom{r}{2}}z^r}{1-q^{r-1}xz}.
\end{equation*}
The results in \cite{HM} are for general double-sums which enjoy certain symmetries.  For example, with one type of symmetry we have $b>a=c$ \cite[Theorem $1.3$]{HM}, of which a special case reads
\begin{align*}
f_{1,2,1}(x,y;q)
&=\Theta(y;q)m\Big (\frac{q^2x}{y^2},-1;q^3\Big )+\Theta(x;q)m\Big (\frac{q^2y}{x^2},-1;q^3\Big )\\
& \ \ \ \ \ \ \ \ \ \ - y\cdot \frac{(q^3;q^3)_{\infty}^3\Theta(-x/y;q)\Theta(q^2xy;q^3)}
{\Theta(-1;q^3)\Theta(-qy^2/x;q^3)\Theta(-qx^2/y;q^3)}.
\end{align*}
The above expansion has the immediate corollaries:
\begin{equation*}
f_{1,2,1}(q,q;q)=(q)_{\infty}^2, \ f_{1,2,1}(q,-q;q)=\Theta(-q;q^4)\phi(q),
\end{equation*}
where
\begin{equation*}
\phi(q):=\sum_{n=0}^{\infty}\frac{(-1)^nq^{n^2}(q;q^2)_{n}}{(-q)_{2n}}=2m(q,-1;q^3)
\end{equation*}
is a sixth-order mock theta function \cite{AH}.

In the course of resolving an open question on the modularity of certain duals of generalized quantum modular forms of Hikami and Lovejoy \cite{HL}, Mortenson and Zwegers \cite{MZ} obtained a decomposition for the general form (\ref{equation:fabc-def2}), where $b^2-ac>0$.  To state the decomposition, we first define the following expression involving Appell functions \cite{HM,Zw02}:
\begin{definition} Let $a,b,$ and $c$ be positive integers with  $D:=b^2-ac>0$.  Then
\begin{align} 
m_{a,b,c}(x,y,z_1,z_0;q)
&:=\sum_{t=0}^{a-1}(-y)^tq^{c\binom{t}{2}}\Theta(q^{bt}x;q^a)m\Big (-q^{a\binom{b+1}{2}-c\binom{a+1}{2}-tD}\frac{(-y)^a}{(-x)^b},z_0;q^{aD}\Big ) \label{equation:mabc-def}\\
&\ \ \ \ \ +\sum_{t=0}^{c-1}(-x)^tq^{a\binom{t}{2}}\Theta(q^{bt}y;q^c)m\Big (-q^{c\binom{b+1}{2}-a\binom{c+1}{2}-tD}\frac{(-x)^c}{(-y)^b},z_1;q^{cD}\Big ).\notag
\end{align}
\end{definition}
\begin{theorem}\cite[Corollary 4.2]{MZ} Let $a,b,$ and $c$ be positive integers with  $D:=b^2-ac>0$. For generic $x$ and $y$, we have
\begin{align*}
& f_{a,b,c}(x,y;q)=m_{a,b,c}(x,y,-1,-1;q)+\frac{1}{\Theta(-1;q^{aD})\Theta(-1;q^{cD})}\cdot \theta_{a,b,c}(x,y;q),
\end{align*}
where
\begin{align*}
&\theta_{a,b,c}(x,y;q):=
\sum_{d^*=0}^{b-1}\sum_{e^*=0}^{b-1}q^{a\binom{d-c/2}{2}+b( d-c/2 ) (e+a/2  )+c\binom{e+a/2}{2}}(-x)^{d-c/2}(-y)^{e+a/2}\\
&\cdot\sum_{f=0}^{b-1}q^{ab^2\binom{f}{2}+\big (a(bd+b^2+ce)-ac(b+1)/2 \big )f} (-y)^{af}
\cdot \Theta(-q^{c\big ( ad+be+a(b-1)/2+abf \big )}(-x)^{c};q^{cb^2})\\
&\cdot \Theta(-q^{a\big ( (d+b(b+1)/2+bf)(b^2-ac) +c(a-b)/2\big )}(-x)^{-ac}(-y)^{ab};q^{ab^2D})\\
&\cdot \frac{(q^{bD};q^{bD})_{\infty}^3\Theta(q^{ D(d+e)+ac-b(a+c)/2}(-x)^{b-c}(-y)^{b-a};q^{bD})}
{\Theta(q^{De+a(c-b)/2}(-x)^b(-y)^{-a};q^{bD})\Theta(q^{Dd+c(a-b)/2}(-y)^b(-x)^{-c};q^{bD})}.
\end{align*}
Here $d:=d^*+\{c/2 \}$ and $e:=e^*+\{ a/2\}$, with  $0\le \{\alpha \}<1$ denoting fractional part of $\alpha$.
\end{theorem}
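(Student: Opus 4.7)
The plan is to reduce the general indefinite case to a symmetric situation already handled by Hickerson and Mortenson in \cite{HM}, and then extract the Appell function piece via residue-class decompositions of the summation indices. First I would split $s$ into residue classes modulo $a$, writing $s = ak + t$ with $0 \le t < a$, and perform dually the split $r = ck' + t'$ with $0 \le t' < c$. In the first splitting, completing the square in $r$ collapses the inner $r$-sum into a theta function of the form $\Theta(q^{bt}x;q^a)$, up to a monomial shift, while the remaining sum over $k$ has quadratic part governed by the discriminant $D = b^2 - ac$. Pairing the $k \ge 0$ and $k < 0$ halves via the sign factor $\sg(r,s)$ matches the outer sum against an Appell function $m(\,\cdot\,,-1;q^{aD})$ up to a boundary term; this produces the first sum in $m_{a,b,c}(x,y,-1,-1;q)$, and the dual splitting produces the second sum.

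The two residue decompositions do not patch together cleanly, because $\sg(r,s)$ is discontinuous along both coordinate axes. Averaging the two decompositions yields $m_{a,b,c}(x,y,-1,-1;q)$ plus a residual double sum, and the main task—and the principal obstacle—is to identify this residual with the explicit theta quotient $\theta_{a,b,c}(x,y;q)/\bigl(\Theta(-1;q^{aD})\Theta(-1;q^{cD})\bigr)$. An alternative viewpoint, useful as a consistency check, is to define $g(x,y) := f_{a,b,c}(x,y;q) - m_{a,b,c}(x,y,-1,-1;q)$ and show directly that $g$ satisfies the functional equations of a theta function in $x$ and $y$ separately; its zeros and poles, dictated by the known ones of $m(\,\cdot\,,-1;q^{aD})$ and of $f_{a,b,c}$, would then pin $g$ down up to a nonvanishing multiplicative constant.

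To carry out the explicit identification of the residual, I would split both surviving indices modulo $b$, since it is modulo $b$ that the discriminant structure is most transparent; this introduces the parameters $d^*,e^* \in \{0,\ldots,b-1\}$, and a subsequent re-splitting of the remaining sum modulo $b$ introduces the further index $f$ of the same range. The half-integer shifts $\{a/2\}$ and $\{c/2\}$ in the theorem arise naturally during this re-splitting whenever $a$ or $c$ is odd, because balancing the quadratic form forces a translation by a half-period. Applying the Jacobi triple product together with Weierstrass-type three-term theta relations to the resulting products of theta functions should collapse them into the compact quotient featuring $(q^{bD};q^{bD})_\infty^3$ in the numerator and the two theta factors in the denominator.

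The hardest part will be this final collapse: even once the residual has been parametrized in the right way, matching each quadratic exponent in $d,e,f$ against the precise exponents $a\binom{d-c/2}{2}+b(d-c/2)(e+a/2)+c\binom{e+a/2}{2}$ and $ab^2\binom{f}{2}+(\cdots)f$ appearing in $\theta_{a,b,c}$ requires very careful bookkeeping with $D$, $aD$, $bD$, and $cD$ as the relevant base variables. As a sanity check, I would verify the decomposition in the symmetric case $a = c$ and $b > a$, where it must reduce to \cite[Theorem 1.3]{HM}, and on the explicit instance $f_{1,2,1}(x,y;q)$ recalled in the introduction; these checks fix the normalization factor $\bigl(\Theta(-1;q^{aD})\Theta(-1;q^{cD})\bigr)^{-1}$ and confirm that the exponents of $q$ in the theta arguments of $\theta_{a,b,c}$ are correct.
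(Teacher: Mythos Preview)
There is nothing in this paper to compare your proposal against: the theorem you are addressing is not proved here. It is quoted verbatim as \cite[Corollary 4.2]{MZ} and serves only as background to motivate the paper's actual contribution, Theorem~\ref{theorem:mainTheorem}, which treats the opposite regime $D=b^2-ac<0$. The only proof in the paper is the short argument in Section~\ref{section:proof} for that $D<0$ result, and it relies crucially on the fact that iterating the functional equation produces \emph{convergent} partial theta functions---precisely the phenomenon that fails when $D>0$ and that forces the Appell functions and the complicated theta quotient $\theta_{a,b,c}$ to appear.

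Your proposal is therefore not wrong so much as misdirected: you are sketching a proof of the cited Mortenson--Zwegers result rather than of anything this paper establishes. For what it is worth, your outline (residue-class splitting in $s$ mod $a$ and $r$ mod $c$ to isolate the Appell pieces, then a further mod-$b$ decomposition to organize the residual theta term) is broadly in line with how such identities are obtained in \cite{HM,MZ}, but the hard step---collapsing the residual into the explicit quotient with $(q^{bD};q^{bD})_\infty^3$---is where you have only a plan, not an argument, and that step is the entire content of the theorem beyond what was already in \cite{HM}. If your goal is to engage with the present paper, you should instead be looking at Theorem~\ref{theorem:mainTheorem}.
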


When $b^2-ac>0$, one can obtain an Appell function expression such as (\ref{equation:mabc-def}) by first determining the appropriate functional equation for (\ref{equation:fabc-def}) and iterating it.  If one starts to see divergent partial theta functions, one uses a heuristic that relates divergent partial theta functions and Appell functions in order to express (\ref{equation:mabc-def}) in terms of Appell functions up to a theta function.  See for example \cite[Section 3]{HM}, \cite[Section 4.1]{Mo14}, \cite[Section 4]{Mo21A}, \cite[Section 8]{Mo17}.  Determining the theta function is a difficult task.  Sometimes, one can obtain the theta function in the course of a direct proof \cite{Mo9, MZ}.

When $b^2-ac<0$, iterating the appropriate functional equation for (\ref{equation:mabc-def}) yields partial theta functions that do not diverge.  This makes the situation straight forward and also leads us to our main result.  
\begin{theorem}\label{theorem:mainTheorem}
Let $a,b,$ and $c$ be positive integers with  $D:=b^2-ac<0$. For generic $x$ and $y$, we have
 that
\begin{align} 
f_{a,b,c}(x,y;q)
&=\frac{1}{2}\Big (  \sum_{t=0}^{a-1}(-y)^tq^{c\binom{t}{2}}\Theta(q^{bt}x;q^a)
\sum_{r\in\mathbb{Z}}\sg(r) \left (q^{a\binom{b+1}{2}-c\binom{a+1}{2}-tD}\frac{(-y)^a}{(-x)^b}\right )^r
q^{-aD\binom{r+1}{2}}\label{equation:mainIdentity} \\
&\ \ \ \ \ +\sum_{t=0}^{c-1}(-x)^tq^{a\binom{t}{2}}\Theta(q^{bt}y;q^c)
\sum_{r\in\mathbb{Z}}\sg(r)\left (q^{c\binom{b+1}{2}-a\binom{c+1}{2}-tD}\frac{(-x)^c}{(-y)^b}\right )^rq^{-cD\binom{r+1}{2}}\Big ) .\notag
\end{align}
\end{theorem}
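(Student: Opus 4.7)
The plan is to exploit $\sg(r,s)=\tfrac{1}{2}(\sg(r)+\sg(s))$ and split
\begin{equation*}
f_{a,b,c}(x,y;q)=\tfrac{1}{2}(S_1+S_2),
\end{equation*}
where $S_1$ is obtained from (\ref{equation:fabc-def}) by replacing $\sg(r,s)$ with $\sg(s)$, and $S_2$ by replacing it with $\sg(r)$. Since $D<0$, the quadratic form $a\binom{r}{2}+brs+c\binom{s}{2}$ is positive definite, so both series converge absolutely for generic $x,y$ and the rearrangements below are justified. The symmetry $(a,b,c,x,y)\leftrightarrow(c,b,a,y,x)$ exchanges $S_1$ and $S_2$, so it suffices to evaluate $S_1$.

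For $S_1$ the inner sum on $r$ is unrestricted and collapses via the Jacobi triple product,
\begin{equation*}
\sum_{r\in\mathbb{Z}}(-1)^r x^r q^{a\binom{r}{2}+brs}=\Theta(xq^{bs};q^a),
\end{equation*}
giving
\begin{equation*}
S_1=\sum_{s\in\mathbb{Z}}\sg(s)(-y)^s q^{c\binom{s}{2}}\Theta(xq^{bs};q^a).
\end{equation*}
Next I would write $s=t+a\rho$ with $t\in\{0,\dots,a-1\}$ and $\rho\in\mathbb{Z}$; the elementary case check $s\ge 0\iff\rho\ge 0$ (valid because $0\le t\le a-1$) yields the crucial identity $\sg(s)=\sg(\rho)$. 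Applying the quasi-periodicity
\begin{equation*}
\Theta(q^{am}z;q^a)=(-z)^{-m}q^{-a\binom{m}{2}}\Theta(z;q^a),\qquad m\in\mathbb{Z},
\end{equation*}
with $z=xq^{bt}$ and $m=b\rho$ pulls $\Theta(xq^{bt};q^a)$ out as a $\rho$-independent prefactor.

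The decisive step is bookkeeping of the total $q$-exponent. Using $\binom{t+a\rho}{2}=\binom{t}{2}+\binom{a\rho}{2}+at\rho$ together with $D=b^2-ac$, the exponent of $q$ attached to $\rho$ has $\rho^2$-coefficient $-aD/2$ and linear coefficient $-tD+a\binom{b+1}{2}-c\binom{a+1}{2}-aD/2$; matching with $-aD\binom{\rho+1}{2}=-\tfrac{aD}{2}\rho^2-\tfrac{aD}{2}\rho$ then identifies
\begin{equation*}
S_1=\sum_{t=0}^{a-1}(-y)^t q^{c\binom{t}{2}}\Theta(xq^{bt};q^a)\sum_{\rho\in\mathbb{Z}}\sg(\rho)\Bigl(q^{a\binom{b+1}{2}-c\binom{a+1}{2}-tD}\tfrac{(-y)^a}{(-x)^b}\Bigr)^\rho q^{-aD\binom{\rho+1}{2}},
\end{equation*}
which is exactly the first summand of (\ref{equation:mainIdentity}). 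The same computation performed under the symmetry $(a,b,c,x,y)\leftrightarrow(c,b,a,y,x)$ produces the second summand as $S_2$, and assembling $f_{a,b,c}=\tfrac{1}{2}(S_1+S_2)$ finishes the proof.

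The only real obstacle is the algebraic bookkeeping above, which is routine once $\sg(s)=\sg(\rho)$ has been verified. What makes this setting much cleaner than the $b^2-ac>0$ case of \cite{MZ} is precisely that $-aD>0$ here, so each inner sum $\sum_\rho\sg(\rho)w^\rho q^{-aD\binom{\rho+1}{2}}$ is an absolutely convergent false theta function rather than a divergent partial theta needing Appell-function regularization.
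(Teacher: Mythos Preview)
Your proof is correct and follows essentially the same approach as the paper's: split $\sg(r,s)=\tfrac12(\sg(r)+\sg(s))$, collapse the unrestricted index via the Jacobi triple product, write the remaining index as $t+a\rho$ with $0\le t\le a-1$ so that $\sg(s)=\sg(\rho)$, and use the elliptic transformation of $\Theta$ to extract the false theta sum. The paper simply runs the same chain of equalities in the opposite direction (starting from the right-hand side of (\ref{equation:mainIdentity}) and arriving at the double-sum), so the arguments are inverses of one another.
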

\noindent Matsusaka \cite[Theorems 3.12, 3.13, 3.15]{Ma22} only computes decompositions for the case $(a,b,c)=(2p+1,2,3)$ and certain values of $x$ and $y$.  Our Theorem \ref{theorem:mainTheorem} is for $(a,b,c)$ with $b^2-ac<0$.  For another example where iterating the functional equation yields convergent partial theta functions, see \cite[Section 7]{Mo21A}, which discusses triple-sum partial theta function identities found in \cite{KL}.

In Section \ref{section:examples}, we discuss examples related to results in \cite{Hi07, Ma22}.  In Section \ref{section:proof} we prove Theorem \ref{theorem:mainTheorem}.

\section{Examples}\label{section:examples}
In \cite{Ma22}, Matsusaka generalized Hikami's examples of Habiro-type series \cite{Hi07} to five infinite families that are of $q$-hypergeometric multi-sum form:
{\allowdisplaybreaks \begin{align*}
H_{p}^{(1)}(q)&:=\sum_{s_p\ge \dots \ge s_1\ge0}q^{s_p}(q^{s_p+1})_{s_p+1}\prod_{i=1}^{p-1}q^{s_i(s_i+1)}
\left [ \begin{matrix} s_{i+1}\\s_i\end{matrix}\right ]_{q},\\
H_{p}^{(2)}(q)&:=\sum_{s_p\ge \dots \ge s_1\ge0}q^{s_p}(q^{s_p})_{s_p+1}\prod_{i=1}^{p-1}q^{s_{i}^2}
\left [ \begin{matrix} s_{i+1}\\s_i\end{matrix}\right ]_{q},\\
H_{p}^{(3)}(q)&:=\sum_{s_p\ge \dots \ge s_1\ge0}q^{2s_p}(q^{s_p+1})_{s_p+1}\prod_{i=1}^{p-1}q^{s_i(s_i+1)}
\left [ \begin{matrix} s_{i+1}\\s_i\end{matrix}\right ]_{q},\\
H_{p}^{(4)}(q)&:=\sum_{s_p\ge \dots \ge s_1\ge0}q^{s_p}(q^{s_p+1})_{s_p}\prod_{i=1}^{p-1}q^{s_i(s_i+1)}
\left [ \begin{matrix} s_{i+1}\\s_i\end{matrix}\right ]_{q},\\
H_{p}^{(5)}(q)&:=\sum_{s_p\ge \dots \ge s_1\ge0}q^{s_p}(q^{s_p+1})_{s_p+1}\prod_{i=1}^{p-1}q^{s_{i}^2}
\left [ \begin{matrix} s_{i+1}\\s_i\end{matrix}\right ]_{q}.
\end{align*}}%
Matsusaka then employs Bailey's Lemma to express each family in terms of the Hecke-type double-sum form (\ref{equation:fabc-def2}) where $b^2-ac<0$.  Hikami \cite{Hi07} had already obtained the first and fifth series.  Specifically, Hikami and Matsusaka obtain the following Hecke-type expansions of the generalized Habiro-type series \cite{Hi07}, \cite[Section 2.3]{Ma22}:
{\allowdisplaybreaks \begin{align*}
H_{p}^{(1)}(q)&=\frac{1}{(q)_{\infty}}\Big ( \sum_{a,b\ge 0}-\sum_{a,b<0}\Big ) 
(-1)^{a+b}q^{(2p+1)a}q^{4b}q^{(2p+1)\binom{a}{2}+2ab+3\binom{b}{2}},\\
H_{p}^{(2)}(q)&=\frac{1}{(q)_{\infty}}\Big ( \sum_{a,b\ge 0}-\sum_{a,b<0}\Big ) 
(-1)^{a+b}q^{(p+1)a}q^{2b}q^{(2p+1)\binom{a}{2}+2ab+3\binom{b}{2}},\\
H_{p}^{(3)}(q)&=\frac{1}{(q)_{\infty}}\Big ( \sum_{a,b\ge 0}-\sum_{a,b<0}\Big ) 
(-1)^{a+b}q^{(2p+2)a}q^{3b}q^{(2p+1)\binom{a}{2}+2ab+3\binom{b}{2}},\\
H_{p}^{(4)}(q)&=\frac{1}{(q)_{\infty}}\Big ( \sum_{a,b\ge 0}-\sum_{a,b<0}\Big ) 
(-1)^{a+b}q^{(2p+1)a}q^{2b}q^{(2p+1)\binom{a}{2}+2ab+3\binom{b}{2}},\\
H_{p}^{(5)}(q)&=\frac{1}{(q)_{\infty}}\Big ( \sum_{a,b\ge 0}-\sum_{a,b<0}\Big ) 
(-1)^{a+b}q^{(p+1)a}q^{3b}q^{(2p+1)\binom{a}{2}+2ab+3\binom{b}{2}}.
\end{align*}}%
Matsusaki then computes the false theta function decompositions for the above five families in \cite[Theorems 3.12, 3.13, 3.15]{Ma22}.  In \cite[Theorem 3.21]{Ma22}, he computes the radial limits.

We demonstrate our results by computing a few examples, but first remind the reader of some easily derived properties for theta functions.  We recall that
\begin{equation*}
\Theta(q/x;q)=\Theta(x;q) \textup{ and } \Theta(q^2;q^3)=\Theta(q;q^3)=(q)_{\infty}.
\end{equation*}
When $n\in\mathbb{Z}$ we have
$\Theta(q^n;q)=0$ 
and the elliptic transformation property:
\begin{equation}
\Theta(q^{n}x;q)=(-1)^{n}x^{-n}q^{-\binom{n}{2}}\Theta(x;q).\label{equation:j-elliptic}
\end{equation}

For $H_{1}^{(2)}(q)$, our Theorem \ref{theorem:mainTheorem} yields
{\allowdisplaybreaks \begin{align*} 
H_{1}^{(2)}(q)&=\frac{1}{(q)_{\infty}}f_{3,2,3}(q^2,q^2;q)\\
&=\frac{1}{(q)_{\infty}} \sum_{t=0}^{2}(-q^2)^tq^{3\binom{t}{2}}\Theta(q^{2t+2};q^3)
\sum_{r\in\mathbb{Z}}\sg(r)(-q^{-7+5t})^rq^{15\binom{r+1}{2}} \\
&=\frac{1}{(q)_{\infty}}\Big (  \Theta(q^{2};q^3)
\sum_{r\in\mathbb{Z}}\sg(r)(-1)^rq^{-7r}q^{15\binom{r+1}{2}}\\
&\ \ \ \ \ - q^2\Theta(q^{4};q^3)
\sum_{r\in\mathbb{Z}}\sg(r)(-1)^rq^{-2r}q^{15\binom{r+1}{2}}\\
&\ \ \ \ \ + q^{7}\Theta(q^{6};q^3)
\sum_{r\in\mathbb{Z}}\sg(r)(-q^{3})^rq^{15\binom{r+1}{2}}\Big  )\\
&=
\sum_{r\in\mathbb{Z}}\sg(r)(-1)^rq^{8r}q^{15\binom{r}{2}}
 + q\sum_{r\in\mathbb{Z}}\sg(r)(-1)^rq^{13r}q^{15\binom{r}{2}}.
\end{align*}}%

For $H_{2}^{(2)}(q)$, our Theorem \ref{theorem:mainTheorem} yields
{\allowdisplaybreaks \begin{align*}
H_{2}^{(2)}(q)&=\frac{1}{(q)_{\infty}}f_{5,2,3}(q^3,q^2;q)\\
&=\frac{1}{2(q)_{\infty}}\Big (  \Theta(q^{3};q^5)
\sum_{r\in\mathbb{Z}}\sg(r)(-1)^r q^{-26r}q^{55\binom{r+1}{2}} 
 -q^{5}\Theta(q^{2};q^5)
\sum_{r\in\mathbb{Z}}\sg(r)(-1)^r q^{-4r}q^{55\binom{r+1}{2}} \\
&\ \ \ \ \ +q^{11}\Theta(q^{4};q^5)
\sum_{r\in\mathbb{Z}}\sg(r)(-1)^r q^{7r}q^{55\binom{r+1}{2}}
+  q^{19}\Theta(q;q^5)
\sum_{r\in\mathbb{Z}}\sg(r)(-1)^r q^{18r}q^{55\binom{r+1}{2}} \\
&\ \ \ \ \ +\Theta(q^{2};q^3)
\sum_{r\in\mathbb{Z}}\sg(r)(-1)^rq^{-16r}q^{33\binom{r+1}{2}} 
 +q^2\Theta(q;q^3)
\sum_{r\in\mathbb{Z}}\sg(r)(-1)^rq^{-5r}q^{33\binom{r+1}{2}}\Big ).
\end{align*}}%

\section{Proof of Theorem \ref{theorem:mainTheorem}} \label{section:proof}

For reference, we recall that
\begin{align*}
f_{a,b,c}(x,y,q)
&=\sum_{m\in\mathbb{Z}} \sum_{n\in\mathbb{Z}} 
\left(\frac{\sg(m)+\sg(n)}{2}\right)(-1)^{r+s}x^my^nq^{a\binom{m}{2}+bmn+c\binom{n}{2}}.
\end{align*}
Using the (\ref{equation:j-elliptic}), we have
\begin{equation}
\Theta(q^{b(ar+t)}x;q^a)=\Theta(q^{abr}xq^{bt};q^a)
=(-x)^{-br}q^{-tb^2r}q^{-a\binom{br}{2}}\Theta(q^{bt}x;q^a). \label{equation:helpingIdentity}
\end{equation}
\begin{proof}[Proof of Theorem \ref{theorem:mainTheorem}] We want to demonstrate that the right-hand side of (\ref{equation:mainIdentity}) equals the left-hand side of  (\ref{equation:mainIdentity}).  We consider the first summand on the right-hand side.  We have
{\allowdisplaybreaks \begin{align*}
 \sum_{t=0}^{a-1}&(-y)^tq^{c\binom{t}{2}}\Theta(q^{bt}x;q^a)
\sum_{r\in\mathbb{Z}}\sg(r) \left (q^{a\binom{b+1}{2}-c\binom{a+1}{2}-tD}\frac{(-y)^a}{(-x)^b}\right )^r
q^{-aD\binom{r+1}{2}}\\
&= \sum_{t=0}^{a-1}q^{c\binom{t}{2}}\Theta(q^{bt}x;q^a)
\sum_{r\in\mathbb{Z}}\sg(ar+t) \left (q^{a\binom{b+1}{2}-c\binom{a+1}{2}-tD}\right )^r\frac{(-y)^{ar+t}}{(-x)^{br}}
q^{-aD\binom{r+1}{2}}\\
&= \sum_{t=0}^{a-1}q^{c\binom{t}{2}}\sum_{r\in\mathbb{Z}}\sg(ar+t) \Theta(q^{b(ar+t)}x;q^a)(-x)^{br}q^{tb^2r}
q^{a\binom{br}{2}}\\
&\qquad \cdot\left (q^{a\binom{b+1}{2}-c\binom{a+1}{2}-tD}\right )^r\frac{(-y)^{ar+t}}{(-x)^{br}}
q^{-aD\binom{r+1}{2}}\\
&= \sum_{t=0}^{a-1}\sum_{r\in\mathbb{Z}}\sg(ar+t) \Theta(q^{b(ar+t)}x;q^a)q^{c\binom{ar+t}{2}}(-y)^{ar+t}\\
&= \sum_{n\in\mathbb{Z}}\sg(n) \Theta(q^{bn}x;q^a)q^{c\binom{n}{2}}(-y)^{n}\\
&= \sum_{n\in\mathbb{Z}}\sg(n) q^{c\binom{n}{2}}(-y)^{n}\sum_{m\in\mathbb{Z}}(-x)^{m}q^{bmn}q^{a\binom{m}{2}},
\end{align*}}%
where we used (\ref{equation:helpingIdentity}), substituted $n=ar+t$, and then used the Jacobi triple product identity.  The argument for the second summand on the right-hand side of (\ref{equation:mainIdentity}) is analogous.\qedhere
\end{proof}

\section*{Acknowledgements}
tbd.

\end{document}